\newtheorem{thm}{Theorem}[section]
\newtheorem{lem}[thm]{Lemma}
\newtheorem{cor}[thm]{Corollary}
\newtheorem{prop}[thm]{Proposition}
\newtheorem{prob}{Problem}
\theoremstyle{definition}
\newtheorem{defin}[thm]{Definition}
\newtheorem*{thmA}{Theorem A}
\newtheorem*{thmB}{Theorem B}
\newcommand{\N}{\mathbb{N}}
\newcommand{\R}{\mathbb{R}}
\newcommand{\cC}{\mathcal C}
\newcommand{\cF}{\mathcal F}
\newcommand{\cU}{\mathcal U}
\newcommand{\cV}{\mathcal V}
\newcommand{\keq}{\!=\!}
\newcommand\kin{\!\in\!}
\newcommand{\Lip}{\text{{\rm Lip}}}
\newcommand{\vp}{\varepsilon}
\newcommand{\ie}{\textit{i.e.,}\ }
\begin{document}

\title{Coarse embeddings into $c_0(\Gamma)$}
\author[1]{Petr H\'ajek\thanks{The first named author was supported by GA\v CR 16-073785 and
 RVO: 67985840.}} 
\author[2]{Th.~Schlumprecht\thanks{ The second named  author  was supported by the National Science Foundation under the Grant Number  DMS--1464713.\\
{2000 {\em Mathematics Subject Classification} Primary 46B26; Secondary 54E15, 54D20}}}

\affil[1]{Institute of Mathematics, Academy of Science of the Czech Republic,  \v Zitn\'a 25 115 67 Prague~1,
Czech Republic, 
and Department of Mathematics, Faculty of Electrical Engineering,
Czech Technical University in Prague,  Zikova 4, 160 00 Prague, Czech Republic.

{\it email: \tt hajek@math.cas.cz}}

\affil[2]{Department of Mathematics, Texas A\&M University, College Station, TX 77843, USA
and Department of Mathematics, Faculty of Electrical Engineering,
Czech Technical University in Prague, Zikova 4, 160 00 Prague, Czech Republic.

{\it email: \tt schlump@math.tamu.edu}}

\maketitle
\begin{abstract}  Let $\lambda$ be  a  large enough cardinal number
 (assuming GCH it suffices to let $\lambda=\aleph_\omega$).
 If $X$ is a Banach space with
$\text{dens}(X)\ge\lambda$, which admits a coarse (or uniform)
embedding into any $c_0(\Gamma)$, then $X$ fails to have nontrivial
cotype, i.e. $X$ contains  $\ell_\infty^n$ $C$-uniformly for every $C>1$.
In the special case  when $X$
has a symmetric basis, we may even conclude that it is linearly
 isomorphic with
$c_0(\text{dens}X)$.
\end{abstract}

\section{Introduction}\label{S:1}

The classical result of Aharoni states that every separable
metric space (in particular every separable Banach space) can be bi-Lipschitz
embedded (the definition is given below) into $c_0$.

The natural problem of embeddings of metric spaces into $c_0(\Gamma)$,
for an arbitrary set $\Gamma$, has been
treated by several authors, in particular Pelant and Swift. The
characterizations that they obtained,
and which  play a crucial role in our argument, are described below.

Our main interest, motivated by
some problems posed in \cite{GLZ},
lies in the case of embeddings of Banach spaces into $c_0(\Gamma)$.

We now state the main results of this paper. We first define the following
cardinal numbers inductively.
We put $\lambda_0=\omega_0$, and, assuming for $n\in\N_0$, $\lambda_n$ has
been defined, we put $\lambda_{n+1}=2^{\lambda_n}$.
Then we let

\begin{equation}\label{def-lam}
\lambda=\lim_{n\to\infty} \lambda_n.
\end{equation}
It is clear that
assuming the generalized continuum hypothesis (GCH) $\lambda=\aleph_\omega$.

\begin{thmA}
If $X$ is a Banach space with density
$\text{dens}(X)\ge\lambda$, which admits a coarse (or uniform)
embedding into any $c_0(\Gamma)$, then $X$ fails to have nontrivial
cotype, i.e. $X$ contains  $\ell_\infty^n$ $C$-uniformly for some
 $C>1$ (equivalently, every $C>1$).
\end{thmA}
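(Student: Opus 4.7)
The plan is to argue by contradiction. Suppose $X$ has density $\ge\lambda$ and nontrivial cotype $q<\infty$, and let $f\colon X\to c_0(\Gamma)$ be a coarse embedding with moduli $\rho_1\le\rho_2$. Writing $f_\gamma=e_\gamma^*\circ f$, the $c_0$-valuedness gives that for each $x\in X$ and each $\varepsilon>0$ the essential support $\supp_\varepsilon(x)=\{\gamma:|f_\gamma(x)|>\varepsilon\}$ is finite. Using $\dens(X)\ge\lambda$, fix a large $R$ and pick an $R$-separated net $(x_\alpha)_{\alpha<\lambda}$ inside a ball of radius $S$; after replacing $\Gamma$ by the union of essential supports at countably many thresholds one may assume $|\Gamma|\le\lambda$. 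For each pair $\alpha<\beta$ the lower coarse estimate yields a witness $\gamma(\alpha,\beta)$ where $|f_{\gamma(\alpha,\beta)}(x_\alpha)-f_{\gamma(\alpha,\beta)}(x_\beta)|\ge\rho_1(R)/2$, which necessarily lies in the finite top support of $f(x_\alpha)$ or $f(x_\beta)$. These finite witnesses are the combinatorial handle provided by the Pelant--Swift style characterisations mentioned in the introduction.

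The combinatorial heart is an iterated Erd\H{o}s--Rado argument using the partition relation $\lambda_n^+\to(\aleph_1)^n_{\aleph_0}$. At each arity $k=2,3,\dots$ one colors $k$-tuples $\alpha_1<\cdots<\alpha_k$ of net indices by countable data recording: the equality/order pattern of all $\gamma$'s appearing in the top supports relevant to the tuple, the rationally-discretised values $f_\gamma(x_{\alpha_i})$ at those $\gamma$'s, and rational approximations to $\|x_{\alpha_i}-x_{\alpha_j}\|_X$ for all pairs inside the tuple. Stabilisation at arity $k$ costs one exponential jump in the cardinal, so iterating through all $k$ and diagonalising produces an $\aleph_1$-sized subnet on which the whole witness structure of $f$ is canonical. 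This is the step that forces $\lambda$ to be the limit of the tower $\lambda_{n+1}=2^{\lambda_n}$: each arity demands its own Erd\H{o}s--Rado application, and no single large cardinal suffices.

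Finally, the canonical structure must be reconciled with the $c_0$-support criterion and with nontrivial cotype. On the stabilised subnet, consecutive differences $x_{\alpha_{2i}}-x_{\alpha_{2i-1}}$ are witnessed by ``fresh'' coordinates $\gamma_i\in\Gamma$ whose values on the remaining pairs are $o(\rho_1(R))$; combined with the upper modulus $\rho_2(S)$ this produces a bounded quasi-equilateral configuration in $X$ in which every $\pm 1$-combination of consecutive differences has $c_0$-norm comparable to a single coordinate fluctuation. A standard blocking/averaging argument then extracts sequences in $X$ whose spreading model is isometric to the $\ell_\infty$ unit basis, which is incompatible with finite cotype and yields the desired contradiction; the uniform case is handled symmetrically since the same Erd\H{o}s--Rado bookkeeping applies to a uniform embedding once the modulus of continuity is discretised. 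The principal obstacle I anticipate is precisely this combinatorial bookkeeping: at every arity one must simultaneously stabilise the pattern of essential supports, the discretised values of the $f_\gamma$, and the mutual $X$-distances, all while preserving $R$-separation, and it is this multi-layer stabilisation that forces $\lambda$ to be a strong limit rather than merely some single large cardinal.
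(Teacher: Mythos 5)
Your proposal breaks down at the set-theoretic step that it itself identifies as the heart of the matter. Erd\H{o}s--Rado at arity $k$ with countably many colours needs a ground set of size roughly $\lambda_{k-1}^+$ and returns a homogeneous set of size only $\aleph_1$; once you have dropped to $\aleph_1$ you can no longer afford the next arity, and extracting a single $\aleph_1$-sized subnet that is canonical for \emph{all} arities simultaneously is exactly the partition relation $\kappa\to(\aleph_1)^{<\omega}_{\aleph_0}$, which defines an Erd\H{o}s-type cardinal and fails for every cardinal below it --- in particular for $\lambda=\lim_n\lambda_n$. So ``iterating through all $k$ and diagonalising'' is not available in ZFC; it is precisely the kind of hypothesis (weakly compact/partition cardinals) that the paper is written to avoid. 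The paper sidesteps this by never asking for one homogeneous set for all arities: the uniform quantitative datum is fixed \emph{before} $n$ is chosen, namely the radius $r$ of a point-finite uniformly bounded cover $\cV$ obtained from Swift's coarse Stone property (equivalent to coarse embeddability into $c_0(\Gamma)$), and then each $n$ is handled by a separate application of Theorem B, whose proof only needs the finite tower $\exp_+(\omega_1,n^2)\le\lambda$. Uniformity in $n$ of the $\ell_\infty^n$-constant comes from the fixed $r$, not from any single stabilised subnet; your diagnosis of why $\lambda$ must be the limit of the tower is therefore also off the mark.

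Even granting the canonisation, the final step does not deliver $\ell_\infty^n$'s inside $X$. All the data you stabilise concern the values of the nonlinear map $f$ at the net points $x_\alpha$; since $f$ is neither linear nor invertible with any modulus bounding $X$-norms of \emph{linear combinations}, canonical behaviour of witness coordinates gives no control of $\bigl\|\sum_i\pm(x_{\alpha_{2i}}-x_{\alpha_{2i-1}})\bigr\|_X$: the points $x_{\alpha_0}+\sum_i\pm u_i$ are not in your configuration, so neither the coarse upper nor lower modulus ever applies to them, and norm estimates for combinations of the \emph{images} in $c_0(\Gamma)$ say nothing about spreading models of sequences in $X$ (bounded separated nets with perfectly canonical image structure exist in Hilbert space, which has cotype 2). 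The paper's solution is to build the linear structure into the colouring from the outset: it extracts a long normalized monotone basic sequence $(e_\mu)_{\mu<\lambda}$, colours each $F=\{\gamma_1<\dots<\gamma_n\}$ by the $2^n$-tuple of cover sets of $\cV$ containing the sign-vectors $\sum_i\vp_ie_{\gamma_i}$, and then alternative (2) of Theorem B contradicts point-finiteness of $\cV$ (a fixed vector would lie in infinitely many members), while alternative (1) gives disjoint $F,G$ with $\|\sum_i\vp_ie_{\gamma_i}-\sum_i\vp_ie_{\beta_i}\|\le 2r$ for \emph{all} sign choices, so $u_i=e_{\gamma_i}-e_{\beta_i}$ is $4r$-equivalent to the unit vector basis of $\ell_\infty^n$, with $4r$ independent of $n$. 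Your scheme has no analogue of the sign-vector configuration, no point-finiteness dichotomy to rule out the degenerate case, and no mechanism producing the upper bound $2r$ uniformly in $n$ (a fixed choice of $R<2S$ gives no control of $w_f(2S)/\rho_f(R)$); these are missing ideas, not details.
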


Our method of proof gives a much stronger result for Banach
spaces with a symmetric basis. Namely, under the assumptions
of Theorem A, such spaces are linearly isomorphic with $c_0(\Gamma)$
(Theorem \eqref{sym-case}).

Theorem A will follow from the following combinatorial result
which  is of independent interest.

\begin{thmB} Assume that $\Lambda$ is a set whose cardinality is at least
$\lambda$, $n\in\N$, and
$\sigma:[\Lambda]^n\to \cC$ is a map into an arbitrary set $\cC$.
Then (at least) one of the following conditions holds:
\begin{enumerate}
\item There is a  sequence $(F_j)_{j=1}^\infty$ of pairwise
disjoint elements of $[\Lambda]^n$, so that
$\sigma(F_i)=\sigma(F_j)$, for all $i,j\in\N$.
\item There is an $F\in[\Lambda]^{n-1}$ so that
$\sigma\big(\big\{ F\cup\{\gamma\}: \gamma\in\Lambda\big\}\big)$ is infinite.
\end{enumerate}
\end{thmB}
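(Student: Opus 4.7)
The plan is to prove Theorem B by induction on $n$, with a strengthened inductive conclusion: whenever $(2)$ fails, there exist a subset $\Lambda' \subset \Lambda$ of cardinality $|\Lambda|$ and a finite set $R \subset \cC$ such that $\sigma([\Lambda']^n) \subset R$. Given such a finite-valued restriction of $\sigma$, the infinite Ramsey theorem applied to $\sigma|_{[\Lambda']^n}$ yields an infinite monochromatic subset, from which an infinite pairwise disjoint family with constant $\sigma$-value, i.e., condition $(1)$, follows at once.

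The base case $n=1$ is direct pigeonhole: $(2)$ failing means $\sigma(\Lambda)$ is finite, and the union of the fibres of $\sigma$ of full cardinality serves as $\Lambda'$. For the inductive step at $n \geq 2$, assuming $(2)$ fails, the slice $\sigma_\gamma(F) := \sigma(F \cup \{\gamma\})$ on $[\Lambda \setminus \{\gamma\}]^{n-1}$ still has $(2)$ failing, since an $(n-2)$-set $G \subset \Lambda \setminus \{\gamma\}$ witnessing $(2)$ for $\sigma_\gamma$ would make $G \cup \{\gamma\}$ a witness for $\sigma$. The inductive hypothesis applied to each $\sigma_\gamma$ yields a subset $\Lambda_\gamma \subset \Lambda \setminus \{\gamma\}$ of cardinality $|\Lambda|$ and a finite palette $R_\gamma \subset \cC$ with $\sigma_\gamma([\Lambda_\gamma]^{n-1}) \subset R_\gamma$. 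Two combinatorial devices are then used to combine these local data. First, the $\Delta$-system lemma applied to the family of finite sets $\{R_\gamma\}_{\gamma \in \Lambda}$ produces a subset $\Lambda^\Delta \subset \Lambda$ of cardinality $|\Lambda|$ on which the $R_\gamma$'s share a common finite root $R$. Second, inside $\Lambda^\Delta$ a transfinite recursion produces a further subset $X$, of cardinality $|\Lambda|$, with the closure property $\alpha \in \Lambda_\beta$ for every pair of distinct $\alpha, \beta \in X$; at each stage $\xi$ of the recursion, the forbidden set $\{\alpha_\eta : \eta < \xi\} \cup \bigcup_{\eta < \xi}(\Lambda \setminus \Lambda_{\alpha_\eta})$ remains of cardinality less than $|\Lambda|$, thanks to a co-smallness feature built into the strengthened inductive statement. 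For any $n$-set $A \subset X$ and any $\gamma \in A$, closure gives $A \setminus \{\gamma\} \subset \Lambda_\gamma$, hence $\sigma(A) \in R_\gamma$; intersecting over $\gamma \in A$ and invoking the $\Delta$-system identity $R_\alpha \cap R_\beta = R$ for distinct $\alpha, \beta \in X$ forces $\sigma(A) \in R$, which is the strengthened conclusion at level $n$.

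The main obstacle is ensuring that the inductively produced $\Lambda_\gamma$ are co-small in $\Lambda$, since this is what keeps the forbidden set small enough for the transfinite recursion to run. Achieving this requires building co-smallness into the strengthened inductive statement and consumes one layer of cardinal arithmetic per induction level: the $\Delta$-system and free-set-style arguments at level $n$ demand $|\Lambda|$ be a regular cardinal larger than $\lambda_{n-1}$. The tower $(\lambda_n)$ records precisely this stepwise blow-up, and $\lambda = \sup_n \lambda_n$ exceeds $\lambda_n^+$ for every finite $n$, so the hypothesis $|\Lambda| \geq \lambda$ supports the argument uniformly for every exponent $n$.
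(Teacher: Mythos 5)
Your strategy (induction on $n$ with a strengthened conclusion, then the infinite Ramsey theorem) is genuinely different from the paper's proof, which reformulates the statement over a product $\Gamma_1\times\dots\times\Gamma_n$, pushes each fibre of $\sigma$ into a coordinate hyperplane, and then concludes by a stabilization lemma plus a finite counting contradiction. However, as written your induction has a genuine gap, and it sits exactly at the point you flag yourself. The transfinite recursion producing $X$ requires the complements $\Lambda\setminus\Lambda_{\alpha_\eta}$ to have cardinality $<|\Lambda|$, i.e.\ the strengthened inductive statement must read: \emph{if (2) fails, there is a co-small $\Lambda'\subset\Lambda$ (meaning $|\Lambda\setminus\Lambda'|<|\Lambda|$) and a finite $R\subset\cC$ with $\sigma([\Lambda']^n)\subset R$}. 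This co-small version is false already for $n=2$, so it cannot be ``built into'' the induction. Counterexample: write $\Lambda$ as a disjoint union $P\cup Q$ with $P=\{p_\xi:\xi<\kappa\}$, $Q=\{q_\xi:\xi<\kappa\}$, $\kappa=|\Lambda|$; colour every pair inside $P$ or inside $Q$ by $c_0$, colour $\{p_\xi,q_\eta\}$ by $c_1$ if $\xi\neq\eta$, and by $d_\xi$ if $\xi=\eta$, where the $d_\xi$ are pairwise distinct new colours. Every point sees only three colours, so (2) fails; yet any $\Lambda'$ with $|\Lambda\setminus\Lambda'|<\kappa$ contains both $p_\xi$ and $q_\xi$ for $\kappa$ many $\xi$, so $\sigma([\Lambda']^2)$ is infinite. (Theorem B itself is not contradicted: alternative (1) holds via the pairs inside $P$.)

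The consequence is that the induction cannot close. What your level-$n$ construction actually outputs --- the $\Delta$-system subfamily $\Lambda^\Delta$ and the recursively built $X$ --- is only a set of full cardinality, never a co-small one, and by the example above no construction can do better; but with only full-cardinality sets $\Lambda_\gamma$ the recursion at the next level breaks down, since the forbidden sets $\Lambda\setminus\Lambda_{\alpha_\eta}$ may themselves have cardinality $|\Lambda|$ (indeed, even a single ``closed'' pair $\alpha\in\Lambda_\beta$, $\beta\in\Lambda_\alpha$ need not exist: take $\Lambda_\gamma$ to be a tail of ordinals above $\gamma$). Your argument does prove the case $n=2$, because there the level-$1$ data $\Lambda_\gamma=\Lambda\setminus\{\gamma\}$ is trivially co-small, but it stalls at $n=3$. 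A separate, minor point: $\lambda$ has cofinality $\omega$ and is singular, so for a fixed $n$ you must first pass to a subset of suitable regular cardinality (some $\lambda_m^+$) before using the $\Delta$-system lemma and the recursion; that adjustment is harmless and parallels the paper's use of the cardinals $\exp_+(\omega_1,n^2)$, but it does not repair the co-smallness obstruction.
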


The above Theorem B was previously deduced in \cite{PHK} from a combinatorial result
of Baumgartner, provided $\Lambda$ is a weakly compact cardinal number
(whose existence is not provable in ZFC, as it is inaccessible \cite{Je} p. 325, p. 52).
The authors in \cite{PHK} pose a question whether assuming that $\Gamma$ is
uncountable is sufficient in Theorem B.

Theorem B is used in order to obtain a scattered compact set $K$ 
of height $\omega_0$, such that $C(K)$
 does not uniformly embed into $c_0(\Gamma)$. It is easy to check that
our version of Theorem B implies a ZFC example of such a $C(K)$ space.
It is further shown in \cite{PHK} that the space $C[0,\omega_1]$
does not uniformly embed into any $c_0(\Gamma)$.

Let us point out that a special case of Theorem A
was obtained by Pelant and Rodl \cite{PR}, namely it was shown there that
$\ell_p(\lambda), 1\le p<\infty$,
spaces (which are well known to have nontrivial cotype)
do not uniformly embed into any $c_0(\Gamma)$.

The paper is organized as follows. In Section \ref{S:2} we recall
Pelant's \cite{Pe,PHK} amd Swift's \cite{Sw} conditions   for
Lipschitz, uniform, and  coarse embeddability into $c_0(\Gamma)$.
 In Section \ref{S:3} we provide a proof for Theorem B.
 Finally, in Section \ref{S:4}, we provide a proof
 of Theorem A as well as the symmetric version of the result.

All set theoretic concepts and results used in our note can be found in
\cite{Je}, whereas for facts concerning nonseparable Banach spaces
\cite{HMVZ} can be consulted.

\section[Pelant's and Swift's criteria]{Pelant's and Swift's criteria  for Lipschitz, uniform, and  coarse embeddability into $c_0(\Gamma)$}\label{S:2}

In this section we recall some of the notions  and results  by 
Pelant \cite{Pe,PHK} and Swift \cite{Sw} about embeddings into $c_0(\Gamma)$.

For a metric space $(M,d)$ a {\em  cover} is a set $\cU$ of subsets of $M$ 
such that $M=\bigcup_{U\in\cU} U$. 
A  cover $\cU$  of $M$ is called {\em uniform}  if
there is an $r>0$ so that for all $x\in M$ there is a $U\in\cU$, 
so that  $B_r(x)=\{x'\in M: d(x',x)<r\}\subset U$. 
It is   called {\em uniformly  bounded} if the diameters of the $U\in\cU$
are uniformly bounded, and it is called {\em point finite} if every $x\in M$ 
lies in only finitely many $U\in\cU$. 
A cover $\cV$ of $M$ is a {\em refinement } of a cover $\cU$, 
if for every $V\in\cV$
there is a $U\in\cU$, for which $V\subset U$.

\begin{defin}{\rm\cite{PHK}}  A metric space $(M,d)$ is said to have the {\em Uniform Stone Property} if every uniform cover  $\cU$ of $M$ has a point finite uniform  refinement.
\end{defin}

\begin{defin}{\rm\cite{Sw}}  A metric space $(M,d)$ is said to have the {\em Coarse Stone Property} if every bounded cover is the refinement of a point finite uniformly bounded cover.
\end{defin}

\begin{defin} Let $(M_1,d_1)$ and $(M_2,d_2)$ be two metric spaces. For a map $f:M_1\to M_2$ we define the  {\em modulus of uniform continutiy} $w_f:[0,\infty)\to [0,\infty]$, and the
{\em modulus of expansion} $\rho:[0,\infty)\to [0,\infty]$ as follows
\begin{align*}
w_f(t)&=\sup\big\{ d_2\big(f(x),f(y)\big): x,y\in M_1,\, d_1(x,y)\le t\big\} \text{ and } \\
\rho_f(t)&=\inf\big\{ d_2\big(f(x),f(y)\big): x,y\in M_1,\, d_1(x,y)\ge t\big\}.
\end{align*}
The map  $f$ is called {\em uniform continuous } if  
$\lim_{t\to 0} w_f(t)=0$, and it is called a {\em uniform embedding} 
if moreover $\rho_f(t)>0$ for every $t>0$. It is called {\em coarse} if
$w_f(t)<\infty$, for all $0<t<\infty$ and it is called a {\em coarse embedding}, 
if $\lim_{t\to\infty}\rho_f(t)=\infty$.
The map $f$ is called {\em Lischitz continuous} if
$$\Lip(f)=\sup_{x\not=y} \frac{d_2(f(x),f(y))}{d_1(x,y)} <\infty,$$
and a bi-Lipschitz embedding, if $f$ is injective and $\Lip(f^{-1})$ is also finite.
\end{defin}
The following result recalls results from \cite{PHK}(for (i)$\iff$(ii)) and \cite{Sw} (for (ii)$\iff$(iii)$\iff$(iv)$\iff$(v)).
\begin{thm} For a Banach space $X$ the following properties are equivalent.
\begin{enumerate}
\item[(i)] $X$ has the uniform Stone Property.
\item[(ii)] $X$ is uniformly embeddable into $c_0(\Gamma)$, for some 
set $\Gamma$.
\item[(iii)] $X$ has the coarse Stone Property.
\item[(iv)] $X$ is coarsely embeddable  into $c_0(\Gamma)$, for some
set $\Gamma$.
\item[(v)] $X$ is bi-Lipschitzly embeddable into  $c_0(\Gamma)$, for some
set $\Gamma$.
\end{enumerate}
\end{thm}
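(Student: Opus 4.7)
The plan is to establish the cycle $(v) \Rightarrow (ii) \Rightarrow (i) \Rightarrow (v)$ together with $(v) \Rightarrow (iv) \Rightarrow (iii) \Rightarrow (v)$. The trivial implications $(v) \Rightarrow (ii)$ and $(v) \Rightarrow (iv)$ follow because every bi-Lipschitz embedding is simultaneously a uniform and a coarse embedding.

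For $(ii) \Rightarrow (i)$ and $(iv) \Rightarrow (iii)$, the first step is to verify that $c_0(\Gamma)$ itself possesses both Stone properties. The key combinatorial fact is that for any $\vp>0$ the family $\{x \in c_0(\Gamma) : |x(\gamma)| > \vp\}$, indexed by $\gamma \in \Gamma$, is point-finite, because each element of $c_0(\Gamma)$ has only finitely many coordinates with modulus exceeding $\vp$. By translating and rescaling such families I would produce point-finite uniform refinements of any uniform cover of $c_0(\Gamma)$ and point-finite uniformly bounded coarsenings of any bounded cover. Both Stone properties then transfer from $c_0(\Gamma)$ to the subset $f(X)$, and thence to $X$, by pulling back covers along the uniformly (respectively coarsely) continuous inverse of $f$ on its image.

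The main content is $(i) \Rightarrow (v)$, for which I would run Pelant's coordinate construction. For each $n \in \N$ apply the uniform Stone property to the cover of $X$ by open balls of radius $2^{-n}$ to produce a point-finite uniform refinement $\cV_n$ with uniformity constant $r_n>0$; index each $\cV_n$ by a set $\Gamma_n$, set $\Gamma = \bigsqcup_n \Gamma_n$, and define for each $V \in \cV_n$
\[
\phi_V(x) = \min\bigl(2^{-n},\, \dist(x, X \setminus V)\bigr),
\]
assembling $F(x) = (\phi_V(x))_V$. Point-finiteness of each $\cV_n$ together with the $2^{-n}$ truncation forces $F(x) \in c_0(\Gamma)$; each $\phi_V$ is $1$-Lipschitz, yielding a uniform upper bound on $\Lip(F)$; and the lower bound follows by taking the largest $n$ at which some $V \in \cV_n$ still contains both $x$ and $y$, and observing that at the next scale some $V' \in \cV_{n+1}$ covers the ball $B_{r_{n+1}}(x)$ but not $y$, so $\phi_{V'}(x) \gtrsim r_{n+1}$ while $\phi_{V'}(y) = 0$, with $r_{n+1}$ comparable to $\|x-y\|$ by the choice of scale.

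The remaining implication $(iii) \Rightarrow (v)$ is obtained from $(i) \Rightarrow (v)$ by exploiting homogeneity of the Banach space norm: a dilation turns a uniform cover at small scale into a bounded cover at large scale, so the coarse Stone property of $X$ implies the uniform Stone property. The principal obstacle throughout is the construction in $(i) \Rightarrow (v)$: the truncation thresholds $2^{-n}$ and the uniformity constants $r_n$ must be coordinated across all scales $n$ so that the aggregated map actually lands in $c_0(\Gamma)$ rather than only in $\ell_\infty(\Gamma)$, and so that the lower Lipschitz estimate survives the truncations uniformly in $n$.
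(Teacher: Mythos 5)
You should first be aware that the paper does not prove this theorem: it is quoted as known, with (i)$\iff$(ii) attributed to \cite{PHK} and (ii)$\iff$(iii)$\iff$(iv)$\iff$(v) to \cite{Sw}, so your proposal has to be measured against those results rather than against an argument in the text.

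Several of your steps are sound: (v)$\Rightarrow$(ii) and (v)$\Rightarrow$(iv) are indeed trivial, the two Stone properties do pull back along uniform, respectively coarse, embeddings and pass to subsets, and your dilation argument that for a normed space the coarse Stone property implies the uniform one is correct and genuinely uses homogeneity. The genuine gap is the central step (i)$\Rightarrow$(v). With coordinates $\phi_V=\min\bigl(2^{-n},\dist(\cdot,X\setminus V)\bigr)$, $V\in\cV_n$, $n\in\N$, every coordinate of $F$ is at most $2^{-n}\le 1$, so $F(X)$ is a bounded subset of $c_0(\Gamma)$; a map with bounded range cannot satisfy a lower Lipschitz estimate on an unbounded Banach space, so this construction can at best give a uniform embedding, i.e.\ (i)$\Rightarrow$(ii), which is Pelant's actual statement. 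Moreover, even at small scales the claimed lower bound is unjustified: the uniform Stone property gives no quantitative relation between the Lebesgue number $r_{n+1}$ of the refinement $\cV_{n+1}$ and the radius $2^{-n-1}$ of the ball cover it refines, so from $\|x-y\|\approx 2^{-n}$ you only get $\|F(x)-F(y)\|\ge r_{n+1}$, which may be incomparably smaller than $\|x-y\|$; ``$r_{n+1}$ comparable to $\|x-y\|$ by the choice of scale'' is precisely what is not available, and it also kills (iii)$\Rightarrow$(v) as you derive it from (i)$\Rightarrow$(v). Repairing this is the real content of the bi-Lipschitz statement (v): one works at a single scale, uses homogeneity of the norm to rescale the resulting point-finite cover so that Lebesgue number and mesh stay proportional, and then must glue the countably many rescaled maps so that the result still lies in $c_0(\Gamma)$ --- the coarse scales are the delicate point, since there the truncation levels grow --- which is Swift's contribution and is not routine; you flag exactly this coordination problem as ``the principal obstacle'' but do not resolve it. Finally, the other nontrivial ingredient, that $c_0(\Gamma)$ itself has the uniform and coarse Stone properties, is asserted with only its germ (point-finiteness of the sets $\{x:|x(\gamma)|>\vp\}$); the actual construction of point-finite uniform refinements from this observation still has to be carried out.
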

It is easy to see, and  was noted in \cite{PHK,Sw}, that the uniform
Stone property and the coarse Stone property are inherited by subspaces.
The equivalence (i)$\iff$(ii) was used in \cite{PHK} to show that
$C[0,\omega_1]$ does not uniformly embed in any $c_0(\Gamma)$. It was also used
  to prove that certain other $C(K)$-spaces do not  uniformly embed into 
  $c_0(\Gamma)$:
Let $\Lambda$ be any set and denote  for $n\in\N$ by $[\Lambda]^{\le n}$ and $[\Lambda]^n$ the subsets of $\Lambda$ which have cardinality  at most $n$ and
exactly $n$, respectively. Endow $[\Lambda]^{\le n}$ with the restriction of the product topology on $\{0,1\}^\Lambda$ (by identifying each set with its characteristic  function).
Then define $K_\Lambda$ to be the  one-point  Alexandroff compactification of  the topological sum  of the spaces $[\Lambda]^{\le n}$, $n\in\N$.
It was shown in \cite{PHK} that if $\Lambda$ satisfies Theorem B
 then
$C(K_\Lambda)$ is not uniformly Stone and thus does not embed  uniformly into 
any $c_0(\Gamma)$.

\section{A combinatorial argument}\label{S:3}

We start by introducing property $P(\alpha)$ for a cardinal 
$\alpha$ as follows.
\begin{align}
&\text{For every $n\in\N$ and any  map $\sigma:[\alpha]^n\to \cC$, $\cC$ being an arbitrary set,} \tag{$P(\alpha)$}\\
&\text{(at least) one of the following two conditions hold:} \notag \\
&\text{There is a sequence $(F_j)$  of pairwise  disjoint elements of $[\lambda]^n$, with $\sigma(F_i)\keq\sigma(F_j)$,}  \label{E:3.1}\\
&\text{for  any $i,j\in\N$.}\notag\\
&\text{There is an $F\in [\lambda]^{n-1}$, so that $\sigma\big(\big\{ F\cup\{\gamma\}: \gamma\in \lambda\setminus F\big\}\big)$ is infinite.} \label{E:3.2}
\end{align}

As remarked in Section \ref{S:2}, if $\kappa$ is an uncountable  weakly
compact cardinal number, then $P(\kappa )$ holds. But the existence of
weakly compact
cardinal numbers requires further set theoretic axioms, beyond ZFC \cite{Je}.
In
 \cite[Question 3]{PHK} the authors ask if $P(\omega_1)$ is true.
 \begin{thm}\label{T:3.1}For  $\lambda$ defined by \eqref{def-lam},
 $P(\lambda)$ holds.   \end{thm}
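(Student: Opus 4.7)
The plan is to prove by induction on $n$ the stronger claim that $P$ at level $n$ holds for any set of cardinality at least some $\nu_n$, with $\nu_1 := \aleph_1$ and $\nu_{n+1} := (2^{\nu_n})^+$. Because $\lambda = \lim_n \lambda_n$ is a strong limit cardinal, one checks inductively that $\nu_n < \lambda$ for every $n$: if $\nu_n \leq \lambda_k$ then $2^{\nu_n} \leq \lambda_{k+1}$ and hence $\nu_{n+1} \leq \lambda_{k+1}^+ \leq \lambda_{k+2} < \lambda$. The Theorem then follows from $|\alpha| \geq \lambda \geq \nu_n$. The base case $n = 1$ is pigeonhole: either $\sigma: [\alpha]^1 \to \mathcal{C}$ has infinite range (so (2) with $F = \emptyset$), or some color has infinitely many preimages (so (1)).

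For the induction step, let $\sigma: [\alpha]^{n+1} \to \mathcal{C}$ with $|\alpha| \geq \nu_{n+1}$ and suppose (2) fails, so that every link $S_F := \{\sigma(F \cup \{\gamma\}): \gamma \in \alpha \setminus F\}$ with $F \in [\alpha]^n$ is finite. I would fix $\beta \subseteq \alpha$ of cardinality $\nu_n$, and for each $a \in \alpha \setminus \beta$ define its \emph{type over $\beta$} as the map $t_a: [\beta]^n \to \mathcal{C}$, $t_a(F) := \sigma(F \cup \{a\})$. Since (2) fails, $t_a \in \prod_{F \in [\beta]^n} S_F$, a product of finite sets of cardinality $\leq \aleph_0^{\nu_n} = 2^{\nu_n} < \nu_{n+1} \leq |\alpha \setminus \beta|$. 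If every type class were finite, then $|\alpha \setminus \beta| \leq 2^{\nu_n} \cdot \aleph_0 = 2^{\nu_n}$, a contradiction; hence some type $t^*$ is realised by an infinite $A \subseteq \alpha \setminus \beta$.

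Next I apply the induction hypothesis to $\phi := t^*: [\beta]^n \to \mathcal{C}$, which splits into two sub-cases. If (1) holds for $\phi$, with pairwise disjoint $F_j \in [\beta]^n$ and $\phi(F_j) \equiv c$, then pick distinct $a_j \in A$ (automatically disjoint from each $F_k \subseteq \beta$); the $(n+1)$-sets $F_j \cup \{a_j\}$ are pairwise disjoint and satisfy $\sigma(F_j \cup \{a_j\}) = t^*(F_j) = c$, delivering (1) for $\sigma$. If instead (2) holds for $\phi$, with $G \in [\beta]^{n-1}$ witnessing that $\{\phi(G \cup \{\gamma\}): \gamma \in \beta \setminus G\}$ is infinite, then fix any $a^* \in A$: the set $F := G \cup \{a^*\} \in [\alpha]^n$ satisfies $S_F \supseteq \{\sigma((G \cup \{a^*\}) \cup \{\gamma\}): \gamma\} = \{\phi(G \cup \{\gamma\}): \gamma\}$, which is infinite, contradicting the standing assumption that (2) fails for $\sigma$.

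The main obstacle is the simultaneous balancing of cardinal arithmetic at each step: $|\beta|$ must be large enough to invoke the induction hypothesis at level $n$ for $\phi$ on $[\beta]^n$, yet $2^{|\beta|}$ must lie strictly below $|\alpha|$ so that the pigeonhole on types extracts an infinite class. It is precisely the strong-limit nature of $\lambda = \lim_n \lambda_n$, with $\lambda_{n+1} = 2^{\lambda_n}$, that reconciles these two demands at every level of the induction.
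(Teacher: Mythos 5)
Your proof is correct, and it takes a genuinely different route from the paper. You argue by induction on $n$, with thresholds $\nu_1=\aleph_1$, $\nu_{n+1}=(2^{\nu_n})^+$: assuming alternative (2) fails, you fix a subset $\beta$ of size $\nu_n$, attach to each $a\in\alpha\setminus\beta$ its type $t_a\colon [\beta]^n\to\cC$, note that the failure of (2) confines all types to a set of size at most $\aleph_0^{\nu_n}=2^{\nu_n}<|\alpha|$, extract an infinite set $A$ of points sharing one type $t^*$, and then feed $t^*$ to the induction hypothesis on $[\beta]^n$ (its alternative (1) lifts to pairwise disjoint $(n{+}1)$-sets $F_j\cup\{a_j\}$ with constant color, while its alternative (2) would produce an infinite link $S_{G\cup\{a^*\}}$, contradicting the standing assumption); the cardinal bookkeeping $\nu_n<\lambda$ follows from $\lambda$ being a strong limit, exactly as you check. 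The paper instead reformulates the statement as one about products $\Gamma_1\times\cdots\times\Gamma_n$ (Theorem \ref{T:3.2}), first reduces via Lemma \ref{L:3.3} so that each color class lies in a coordinate hyperplane, then repeatedly stabilizes the counting functions $\phi_j$ coordinate by coordinate using regularity of successor cardinals (the Reduction Lemma \ref{L:3.4}, applied $n$ times, which is why the threshold there is $\exp_+(\omega_1,n^2)$), and concludes with a finite double-counting argument on a grid $\prod A_j$ with $|A_j|=(n+1)K_j$. Your argument is shorter and more elementary — the only set-theoretic input is cardinal exponentiation plus a successor-cardinal pigeonhole — and it yields quantitatively better thresholds per level ($\nu_n$, roughly $n$ iterated exponentials, versus the paper's $n^2$ iterations of $\gamma\mapsto(2^{\gamma^+})^+$); the paper's route, in exchange, produces structural information (the hyperplane localization and the coordinatewise stabilization) that is of some independent interest, but for Theorem \ref{T:3.1} itself both deliver the same conclusion, since both thresholds stay below $\lambda$.
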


 For our proof of Theorem \ref{T:3.1} it will be more convenient to reformulate it into a statement about $n$-tuples, instead of sets of cardinality $n$.
 We will first introduce some notation.

 Let $n\in\N$ and  $\Gamma_1$, $\Gamma_2,\ldots \Gamma_n$ be sets  of infinite cardinality, and put $\Gamma=\prod_{i=1}^n \Gamma_i$.  For $a\in  \Gamma$ and $1\le i\le n$
 we denote the $i$-the coordinate of $a$ by $a(i)$. We say that two points $a$ and $b$ in $\Gamma$ are {\em diagonal}, if $a(i)\not=b(i)$, for all $i\in\{1,2,\ldots,n\}$.

  Let $a\in \Gamma_i$ for $i\in\N$.
  For $i\in\{1,2\ldots,n\}$ we call the set
  $$H(a,i)=\big\{(b_1,b_2,\ldots, b_{i-1}, a(i),b_{i+1}, \ldots ,\alpha_{n}): b_j\in \Gamma_j,\text{ for $j\in\{1,2\ldots n\}\setminus\{i\}$}\big\},$$
  the {\em Hyperplane through the point $a$ orthogonal to $i$}.
  We call the set
  $$L(a,i)=\big\{(a(1),\ldots,a(i-1),b_i,a(i+1), \ldots a(n)): b_i\in \Gamma_i\big\},$$
 the {\em Line through the point $a$ in direction of $i$}.

 For   a cardinal number $\beta$, we define  recursively the following  sequence of cardinal numbers
 $\big(\exp_+(\beta,n):n\kin\N_0$\big): $\exp_+(\beta,0)=\beta$, and, assuming $\exp_+(\beta,n)$ has been defined for some $n\in\N_0$, we put
 $$\exp_+(\beta,n+1)= \big(2^{\exp_+(\beta,n)^+}\big)^+.$$
 Here $\gamma^+$ denotes the {\em successor cardinal}, for a cardinal $\gamma$, \ie the smallest cardinal number  $\gamma'$ with $\gamma'>\gamma$.
 Note that since $\exp_+(\gamma,1)\le 2^{2^{2^\gamma}}$, it follows  for the above defined cardinal number $\lambda$, that
 $$\lambda =\lim_{n\to\infty} \exp_+(\omega_0,n).$$
 Secondly, successor cardinals are regular \cite{Je}, and thus every set of cardinality  $\gamma$, with  $\gamma$ being a successor cardinal, can
 be partitioned  for $n\in\N$ into $n$ disjoint sets 
 $\Gamma_1$, $\Gamma_2,\ldots, \Gamma_n$, all of them 
 having also cardinality $\gamma $, and the
 map  
 $\Gamma_1\times \Gamma_2\times\ldots\times \Gamma_n\to 
 \big[ \bigcup_{i=1}^n  \Gamma_i\big]^n$,  $(a_1,a_2,\ldots, a_n)
 \mapsto \{a_1,a_2,\ldots a_n\}$, is  injective.
  We therefore deduce  that the following statement   implies Theorem \ref{T:3.1}.

  \begin{thm}\label{T:3.2} Let $n\in\N$ and a  assume that the sets 
  $\Gamma_1$, $\Gamma_2,\ldots \Gamma_n$ have cardinality at 
  least $\exp_+(\omega_1, n^2)$. For any function
  $$\sigma: \Gamma:=\prod_{i=1}^n\Gamma_i\to \cC,$$
 where $\cC$ is an arbitrary set,
at least one of the following two  conditions hold

  \begin{align}
  \label{E:3.2.1} &\text{There is a sequence $(a^{(j)})_{j=1}^\infty$, of pairwise diagonal elements in $\Gamma$,
  so that}\\
  &\text{$\sigma(a^{(i)})=\sigma(a^{(j)})$, for any $i,j\in\N$.}\notag\\
  \label{E:3.2.2} &\text{There is a line $L\subset \Gamma$, 
  for which $\sigma(L)$ is infinite.}
  \end{align}
  \end{thm}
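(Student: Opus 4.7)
My plan is to prove Theorem \ref{T:3.2} by induction on $n$, strengthening the inductive statement: rather than merely producing a pairwise-diagonal monochromatic sequence, I would show that whenever \eqref{E:3.2.2} fails, there exist infinite subsets $X_i\subset\Gamma_i$ and a color $c\in\cC$ with $\sigma\equiv c$ on $\prod_i X_i$. Such a monochromatic sub-product immediately yields a pairwise-diagonal monochromatic sequence, establishing \eqref{E:3.2.1}.

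The base case $n=1$ is immediate: $\Gamma_1$ itself is the only line, so if \eqref{E:3.2.2} fails then $\sigma(\Gamma_1)$ is finite, and by pigeonhole some fiber $\sigma^{-1}(c)$ has cardinality $|\Gamma_1|$, in particular is infinite.

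For the inductive step, suppose \eqref{E:3.2.2} fails for $\sigma:\Gamma\to\cC$. Lines inside the hyperplane $H(a,n)$ are lines of $\Gamma$, so for every $a\in\Gamma_n$ the restriction $\sigma_a=\sigma(\cdot,a):\prod_{i<n}\Gamma_i\to\cC$ inherits the ``every line has finite image'' property. Since $|\Gamma_i|\ge\exp_+(\omega_1,n^2)\ge\exp_+(\omega_1,(n-1)^2)$ for $i<n$, the (strengthened) inductive hypothesis applied to $\sigma_a$ gives large subsets $X_i(a)\subset\Gamma_i$ and a color $c(a)$ with $\sigma(\cdot,a)\equiv c(a)$ on $\prod_{i<n}X_i(a)$. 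The central task is to uniformize this slice-wise data in the parameter $a$: to find an infinite $T\subset\Gamma_n$, a color $c$, and common infinite cores $X_i\subset\bigcap_{a\in T}X_i(a)$ with $c(a)\equiv c$ on $T$. I would accomplish this in two stages. First, a $\Delta$-system / Erd\H{o}s--Rado style argument is applied coordinate by coordinate for $i=1,\dots,n-1$: at each direction one uses pigeonhole on the family $\{X_i(a):a\in \Gamma_n\}$ to shrink the parameter set $\Gamma_n$ while extracting a common infinite core $X_i$. Each of these $n-1$ refinements consumes one level of the exp-plus hierarchy in the cardinality of $\Gamma_n$, which is affordable because $\Gamma_n$ is so large. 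Second, once the core $\prod_{i<n}X_i$ is fixed, for any $b\in\prod_{i<n}X_i$ one has $c(a)=\sigma(b,a)$ for every surviving $a$, and the line $\{b\}\times\Gamma_n$ has finite $\sigma$-image, so the $c(a)$'s take only finitely many values; a last pigeonhole yields an infinite $T$ on which $c(a)\equiv c$. Choosing any pairwise-distinct $b^{(j)}\in\prod_{i<n}X_i$ (componentwise; possible since each $X_i$ is infinite) and distinct $a_j\in T$ produces the desired pairwise-diagonal monochromatic sequence $(b^{(j)},a_j)$.

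The main obstacle is the coordinate-wise uniformization in the first stage, namely producing common infinite cores $X_i$ simultaneously in all $n-1$ coordinate directions. This requires precisely the cardinality $\exp_+(\omega_1,n^2)$: the gap $n^2-(n-1)^2=2n-1$ between successive exponents provides just enough slack for the $n-1$ coordinate refinements of the parameter set, one invocation of the inductive hypothesis on each slice, and the final pigeonhole on colors -- a total of $2n-1$ operations each costing one exp-plus step. The regularity of the successor cardinals produced by the exp-plus operator is essential, as it guarantees that the iterated shrinkings preserve the requisite cardinalities and that countable intersections of full-cardinality subsets remain nonempty.
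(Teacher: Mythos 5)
Your strategy---induction on $n$ via the strengthened claim that failure of \eqref{E:3.2.2} yields infinite sets $X_i\subset\Gamma_i$ with $\sigma$ constant on $\prod_i X_i$---is genuinely different from the paper's proof (which never inducts on $n$: it first reduces, via Lemma \ref{L:3.3}, to the case where each colour class lies in a hyperplane, assigns to each point $a$ a direction $i(a)$, uses the Reduction Lemma \ref{L:3.4} to bound by a constant $K_i$ the number of points on any line in direction $i$ with $i(\cdot)=i$, and concludes with a finite double-counting contradiction). The strengthened statement is in fact true, but your stage-one uniformization does not work as you describe it. You apply the inductive hypothesis to the full slices, so each $X_i(a)$ is an arbitrary infinite subset of $\Gamma_i$ with $|\Gamma_i|\ge \exp_+(\omega_1,n^2)$; the family of candidate values of $X_i(a)$ therefore has cardinality up to $2^{|\Gamma_i|}$, which can strictly exceed $|\Gamma_n|$, because nothing in the hypothesis prevents all the $\Gamma_j$ from having the same cardinality $\exp_+(\omega_1,n^2)$ --- the assumption is a lower bound relative to $\omega_1$, not a comparison among the coordinates. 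So the pigeonhole ``on the family $\{X_i(a):a\in\Gamma_n\}$'' is exactly what is not available (``affordable because $\Gamma_n$ is so large'' is false), and the accounting $n^2-(n-1)^2=2n-1$ does not correspond to any actual step. A $\Delta$-system extraction does not rescue this either: its root may be finite or empty, whereas you need an infinite common core.

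The gap is repairable by the same device that drives the paper's Reduction Lemma: stabilize data ranging over a set of cardinality strictly smaller than the index set. Before invoking the inductive hypothesis on the slices, first shrink each $\Gamma_i$, $i<n$, to a subset $\tilde\Gamma_i$ of cardinality exactly $\mu:=\exp_+(\omega_1,(n-1)^2)$; lines of a slice are still lines of $\Gamma$, so the inductive hypothesis applies and produces $X_i(a)\subset\tilde\Gamma_i$. Now the tuple $(X_1(a),\dots,X_{n-1}(a))$ takes at most $2^{\mu}<\exp_+(\omega_1,(n-1)^2+1)\le\exp_+(\omega_1,n^2)\le|\Gamma_n|$ values, so some value is attained on an infinite set $T_0\subset\Gamma_n$ (if every fibre were finite, $|\Gamma_n|\le 2^\mu\cdot\omega_0<|\Gamma_n|$); on $T_0$ the sets are literally constant, and your stage two (the line through a fixed $b\in\prod_{i<n}X_i$ in direction $n$ has finite image, then one last pigeonhole for the colour) completes the induction. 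With this correction your route is a valid alternative --- indeed it shows that roughly $n$ levels of the $\exp_+$ hierarchy suffice rather than $n^2$ --- but as written the uniformization step is a genuine gap.
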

  We will make  the following observation before proving Theorem \ref{T:3.2}.
  \begin{lem}\label{L:3.3}
   Let $n\in\N$ and  $\Gamma_1$, $\Gamma_2,\ldots \Gamma_n$  be non empty sets.
Let
   $$\sigma: \Gamma:=\prod_{i=1}^n\Gamma_i\to \cC,$$
be a function that fails both conditions \eqref{E:3.2.1} and \eqref{E:3.2.2}.

   Then there is a set  $\tilde\cC$ and a function
    $$\tilde\sigma: \Gamma:=\prod_{i=1}^n\Gamma_i\to\tilde \cC,$$
that fails both \eqref{E:3.2.1} and \eqref{E:3.2.2} and moreover has 
the property that
    \begin{align}\label{E:3.3.1} \text{for every $c\in\cC$ there is a hyperplane $H_c\subset \Gamma$ so that $\{b\in \Gamma: \tilde\sigma(b)=c\}\subset H_c$.}\end{align}
  \end{lem}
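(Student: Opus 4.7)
The plan is to refine $\sigma$ by partitioning each fiber $S_c := \sigma^{-1}(c)$ into a finite union of hyperplanes; the key structural observation is that the failure of \eqref{E:3.2.1} forces such a finite cover via a maximal-matching argument in the style of K\"onig's theorem.

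Fix $c \in \cC$ and view the points of $S_c$ as edges of the $n$-partite $n$-uniform hypergraph on the vertex set $\bigsqcup_{i=1}^n \Gamma_i$; two edges are disjoint precisely when the corresponding points of $\Gamma$ are diagonal. Using Zorn's lemma, pick a maximal family $M_c \subset S_c$ of pairwise diagonal points. Since $\sigma$ fails \eqref{E:3.2.1}, the family $M_c$ cannot be infinite, hence is finite. By maximality, every $p \in S_c$ must share some coordinate with some $q \in M_c$ (otherwise $M_c \cup \{p\}$ would be a strictly larger pairwise diagonal family), which yields the finite hyperplane cover
$$S_c \;\subset\; \bigcup_{q \in M_c}\,\bigcup_{i=1}^n H(q,i) \;=:\; \bigcup_{j=1}^{k_c} H_{c,j}, \qquad k_c \le n|M_c| < \infty.$$

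Now put $\tilde\cC := \cC \times \N$ and define
$$\tilde\sigma(b) := \bigl(\sigma(b),\; \min\{\,j : b \in H_{\sigma(b),j}\}\bigr),$$
which is well defined because $b$ lies in at least one $H_{\sigma(b),j}$. Every fiber $\tilde\sigma^{-1}((c,j))$ is contained in the single hyperplane $H_{c,j}$, giving \eqref{E:3.3.1}. If $(p^{(m)})$ were an infinite diagonal sequence with $\tilde\sigma(p^{(m)}) \equiv (c,j)$, all $p^{(m)}$ would lie in $H_{c,j}$ and hence agree on the coordinate that $H_{c,j}$ fixes, contradicting diagonality; so $\tilde\sigma$ still fails \eqref{E:3.2.1}. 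For a line $L$, $\sigma(L)$ is finite by the failure of \eqref{E:3.2.2} for $\sigma$, and for each $c \in \sigma(L)$ the restriction of $\tilde\sigma$ to $L \cap S_c$ takes values in the finite set $\{(c,j) : 1 \le j \le k_c\}$; hence $\tilde\sigma(L)$ is finite and \eqref{E:3.2.2} also fails for $\tilde\sigma$.

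The only substantive step is the matching--cover dichotomy in the second paragraph: absence of an infinite pairwise diagonal subset of $S_c$ forces $S_c$ into a finite union of hyperplanes. Once this cover is in hand, the refinement $\tilde\sigma$ is mere bookkeeping, and the preservation of both \eqref{E:3.2.1} and \eqref{E:3.2.2} follows transparently from the two facts that diagonal points cannot share any coordinate and that each original color $c$ is split into only $k_c < \infty$ new colors. I foresee no real obstacle beyond being careful that both negations survive the refinement simultaneously, which they do for the reasons just indicated.
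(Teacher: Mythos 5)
Your proof is correct and follows essentially the same route as the paper: a maximal finite pairwise-diagonal family in each fiber yields a finite hyperplane cover, and the refined coloring by (color, hyperplane index) preserves the failure of both \eqref{E:3.2.1} and \eqref{E:3.2.2}. The only cosmetic difference is your indexing of new colors by $\cC\times\N$ (and your direct "shared coordinate" argument for \eqref{E:3.2.1}) versus the paper's triples $(c,j,i)$; the substance is identical.
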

  \begin{proof} We may assume without loss of generality that $\sigma$ is 
  surjective. Since \eqref{E:3.2.1} is not satisfied  for each $c\in\cC$
there exists an $m(c)\in\N$
  and a (finite) sequence  
  $(a^{(c,j)})_{j=1}^{m(c)}\subset \sigma^{-1}(\{c\})$, 
  which is pairwise diagonal, and  maximal, with this property. 
Hence
  $$\sigma^{-1}(\{c\})\subset\bigcup_{j=1}^{m(c)} \bigcup_{i=1}^n H(a^{(c,j)},i).$$
  Indeed, from  the maximality of  $(a^{(c,j)})_{j=1}^{m(c)}\subset \sigma^{-1}(\{c\})$, it follows that each $b\in\sigma^{-1}(\{b\})$ must have at least one coordinate in common with at least
  one element of  $(a^{(c,j)})_{j=1}^{m(c)}\subset \sigma^{-1}(\{c\})$.

  We define
  $$\tilde\cC=\bigcup_{c\in \cC} \{1,2,\ldots, m(c)\}\times\{1,2\ldots n\}\times \{c\},$$
  and
  \begin{align*}
  &\tilde\sigma:\Gamma\to \tilde\cC,\quad b\mapsto (c,j,i),\text{ where }\\
  &c=\sigma(b),\quad j=\min \Big\{ j': b\in  \bigcup_{i'=1}^n H(a^{(c,j')},i')\Big\}, \text{ and } i= \min\{ i': b\in H(a^{(c,j)},i)\}.
  \end{align*}
  It is clear that $\tilde\sigma$ satisfies \eqref{E:3.3.1}.  Since for every $c\in\cC$,
  $$\{b\in \Gamma: \sigma(b)=c\}=\bigcup_{j=1}^{m(c)}\bigcup_{i=1}^n \{ b\in \Gamma: \tilde\sigma(b)=(c,j,i)\},$$
  $\tilde\sigma$ does not satisfy \eqref{E:3.2.1}.
   In order to verify  that \eqref{E:3.2.2}  is not satisfied, assume 
   $L\subset \Gamma$ is a line, and let $\{c_1,c_2,\ldots,c_p\}$ be 
   the image
   of $L$ under $\sigma$. By construction,
   $$\tilde\sigma(L)\subset \{ (j,i,c_k),  k\le p, \, j\le m(c_k), \i\le n\},$$
   which is also finite.
\end{proof}
\begin{proof}[Proof of Theorem \ref{T:3.2}] We assume  that 
$\sigma: \Gamma=\Gamma_1\times\Gamma_2\times\ldots \times\Gamma_n\to \cC$ 
is a map which fails both
\eqref{E:3.2.1} and \eqref{E:3.2.2}. By Lemma \ref{L:3.3} we may also assume 
that $\sigma$ satisfies \eqref{E:3.3.1}. For each $a\in \Gamma$ we 
 fix
an $i(a)\in \{1,2,\ldots, n\}$ so that $\sigma^{-1}\big(\{\sigma(a)\} \big)\subset H(a,i(a))$. It is important to note that, since \eqref{E:3.2.2} is not satisfied, it follows that  each
line $L$, whose direction is some $j\in\{1,2,\ldots,n\} $, can 
only have finitely many elements $b$ for which $i(b)=j$. Indeed, if $i(b)=j$
 then $b$ is uniquely determined by the value  $\sigma(b) $.
To continue  with  the the proof the following {\em Reduction Lemma} will be essential.

\begin{lem}\label{L:3.4} Let $\beta$ be an uncountable  regular  
cardinal. Assume that  $\tilde\Gamma_1\subset \Gamma_1$, 
$\tilde\Gamma_2\subset \Gamma_2,\ldots,\tilde\Gamma_n\subset \Gamma_n$
 are such that
 $|\tilde\Gamma_i|\ge \exp_+(\beta,n)$, for all $i\in\{1,\dots,n\}$. 

Then, for any $i\in\{1,2 \ldots,n \}$
 there are   a  number $K_i\in\N$, and subsets 
 $\Gamma_1'\subset \tilde\Gamma_1, \Gamma_2'\subset\tilde\Gamma_2,\ldots, 
 \Gamma_n'\subset\tilde\Gamma_n,$
 with $|\Gamma'_j|\ge \beta$, so that
 \begin{align}\label{E:3.4.1}
 \forall (a_1,a_2,\ldots, a_{i-1},a_{i+1},&\ldots a_n)\kin \prod_{j=1, j\not=i}^n\Gamma_i'\quad\\
  &\big|\{a\in \Gamma_i': i(a_1,a_2,\ldots a_{i-1},a,a_{i+1},\ldots a_n)=i\}\big|\le K_i.\notag
 \end{align}
\end{lem}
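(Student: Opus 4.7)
The strategy is a polarized Ramsey / pigeonhole argument. We fix $i \in \{1, \dots, n\}$; for notational ease, take $i = n$. By the observation immediately preceding the lemma, for every line $L$ in the direction of $n$ the set $\{b \in L : i(b) = n\}$ is finite, because such $b$ are uniquely determined by $\sigma(b)$ while $\sigma(L)$ is finite (the failure of \eqref{E:3.2.2}). Hence
\[
h(\vec a) \;:=\; \bigl|\{a \in \tilde\Gamma_n : i(a_1, \dots, a_{n-1}, a) = n\}\bigr|
\]
is a well-defined function $h : \prod_{j < n}\tilde\Gamma_j \to \N_0$. The lemma reduces to producing subsets $\Gamma'_j \subset \tilde\Gamma_j$ ($j < n$) of cardinality $\ge \beta$ and a constant $K_n \in \N$ with $h \le K_n$ on $\prod_{j<n}\Gamma'_j$: one then picks $\Gamma'_n$ to be any $\beta$-sized subset of $\tilde\Gamma_n$, and \eqref{E:3.4.1} follows at once, since the inner set in \eqref{E:3.4.1} is a subset of $\{a \in \tilde\Gamma_n : i(\dots) = n\}$.

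The key combinatorial input is the following polarized Ramsey claim, which we prove by induction on $m$: if $X_1, \dots, X_m$ are sets with $|X_k| \ge \exp_+(\beta, m)$ and $g : \prod_k X_k \to \N_0$, then there exist subsets $Y_k \subset X_k$ with $|Y_k| \ge \beta$ and $K \in \N$ such that $g \equiv K$ on $\prod_k Y_k$. The base case $m = 1$ is immediate from the ordinary pigeonhole principle, since $\exp_+(\beta, 1) = (2^{\beta^+})^+$ is an uncountable regular cardinal and $\N_0$ is countable. For the inductive step, we use the regularity of $|X_m|$ to select, for each $\vec x \in \prod_{k<m} X_k$, a \emph{dominant} value $K(\vec x) \in \N_0$ for which $S_{\vec x} := g(\vec x, \cdot)^{-1}(K(\vec x))$ has cardinality $|X_m|$; applying the inductive hypothesis to the map $\vec x \mapsto K(\vec x)$ (permissible since $|X_k| \ge \exp_+(\beta, m) \ge \exp_+(\beta, m-1)$) yields subsets $Y_k$ ($k < m$) of size $\ge \beta$ and a single $K_0 \in \N_0$ with $K(\vec x) \equiv K_0$ throughout $\prod_{k<m} Y_k$. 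Finally one extracts $Y_m \subset X_m$ of size $\ge \beta$ contained in $\bigcap_{\vec x \in \prod Y_k} S_{\vec x}$: the recursive definition $\exp_+(\beta, m) = (2^{\exp_+(\beta, m-1)^+})^+$ is tailored so that, after a preliminary refinement of the $Y_k$ that constrains the equivalence class of each $S_{\vec x}$ modulo an initial segment of $X_m$ (the "$2^{\dots}$" controls the number of such types), the residual intersection remains of cardinality $|X_m|$ by a regularity estimate, and in particular $\ge \beta$.

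Applying this claim with $m = n - 1$ and $X_j = \tilde\Gamma_j$ to our function $h$ delivers the required $\Gamma'_j$ ($j < n$) and $K_n = K_0$, completing the proof. The principal technical obstacle is precisely this inductive step of the polarized Ramsey claim: since the residual sets $S_{\vec x}$ need not have small complements in $X_m$, a naive intersection argument does not give a large common subset, and one must invoke an Erdős-Rado-style canonical-type construction whose cardinal arithmetic is exactly what the growth rate $\exp_+(\beta, m) = (2^{\exp_+(\beta, m-1)^+})^+$ is designed to support.
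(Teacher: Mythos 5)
Your reduction is fine: restricting attention to $i=n$, using the failure of \eqref{E:3.2.2} to see that the counting function $h(\vec a)=|\{a\in\tilde\Gamma_n: i(a_1,\dots,a_{n-1},a)=n\}|$ is $\N_0$-valued, and reducing the lemma to a polarized pigeonhole statement for $h$ is exactly the right move (and essentially what the paper does). The genuine gap is in your proof of that polarized claim: the entire inductive step is not carried out. Recording only a \emph{dominant value} $K(\vec x)$ and then trying to find $Y_m\subset X_m$ of size $\ge\beta$ inside $\bigcap_{\vec x\in\prod Y_k}S_{\vec x}$ cannot work as stated, since the fibers $S_{\vec x}$, though of full cardinality, may have full-cardinality complements, and an intersection of $\beta$-many such sets can easily be empty. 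You acknowledge this and appeal to ``an Erd\H{o}s--Rado-style canonical-type construction'' whose details (``a preliminary refinement of the $Y_k$ that constrains the equivalence class of each $S_{\vec x}$ modulo an initial segment of $X_m$'') are never specified; as written this is not an argument, and it is not clear it can be made into one, because after stabilizing only the dominant value there may still be $2^{|X_m|}$ possible fibers, so no cardinality bookkeeping of the type you gesture at controls their common intersection.

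The information you discard is precisely what makes the problem easy. The paper's proof (and the natural proof of your claim) stabilizes the \emph{entire section function} rather than a single distinguished value: first shrink the coordinate sets to have strictly decreasing cardinalities $\beta_m>\beta_{m-1}>\dots>\beta_1$ with $\beta_j=\exp_+(\beta,j)$; then, since the map $x_1\mapsto g(x_1,\cdot)$ takes values in a set of functions of cardinality at most $2^{\beta_{m-1}}<\beta_m$, regularity of $\beta_m$ yields a subset of the first coordinate of full size $\beta_m$ on which the whole section $g(x_1,\cdot)$ is one fixed function; iterating coordinate by coordinate one ends with a single $\N_0$-valued function of the last variable on an uncountable regular set, where ordinary pigeonhole (or just boundedness on a large set, which is all the lemma needs --- exact constancy is not required) finishes the proof. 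This is exactly what the growth condition $\exp_+(\beta,m+1)=(2^{\exp_+(\beta,m)^+})^+$ is designed for: it makes the number of possible sections smaller than the (regular) size of the coordinate being stabilized, so no intersection of fibers ever has to be taken. In its current form your proposal replaces this step with an unproved assertion, so the proof is incomplete at its crucial point.
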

\begin{proof} We assume  without loss of generality that $i=n$. Abbreviate  $\beta_j=\exp_+(\beta,j)$, for $j=1,2\ldots n$. We
 choose subsets $\tilde\Gamma_j^{(0)}\subset \tilde \Gamma_j$, for which $\big|\tilde\Gamma^{(0)}_j\big|=\beta_{n+1-j}$.

Since the $\beta_j$'s are regular, it follows for each $j=1,2\ldots ,n-1$ that
 \begin{align*}
\big|\tilde\Gamma^{(0)}_j\big|&=\beta_{n+1-j}\\
                                   &>2^{\beta_{n-j}}\\
                                   &=2^{|\tilde\Gamma_{j+1}^{(0)}\times \tilde\Gamma_{j+1}^{(0)}\times\ldots\times \tilde\Gamma^{(0)}_n|}\\
                                   &=\big|\{ f: \tilde\Gamma_{j+1}^{(0)}\times \tilde\Gamma_{j+1}^{(0)}\times\ldots\times \tilde\Gamma^{(0)}_n\to \N\}\big|.
 \end{align*}
 Abbreviate for $i=1, \ldots , n$.
 $$\cF_{j}=\{ f: \tilde\Gamma_{j}^{(0)}\times \tilde\Gamma_{j+1}^{(0)}\times\ldots\times \tilde\Gamma^{(0)}_n\to \N\} $$
and consider the function
$$\phi_1: \prod_{j=1}^{n-1} \tilde\Gamma_j^{(0)}\to\N,\quad (a_1,a_2,\ldots, a_{n-1})\mapsto\big|\{ a\kin \tilde\Gamma_n^{(0)}: i(a_1,a_2,\ldots,a_{n-1},a)\keq n\}\big|.$$
 For fixed $a_1\in \Gamma_1^{(0)}$,
  $\phi_1(a_1,\cdot)\in \cF_2$, and the cardinality of $\cF_2$ is by 
  the above estimates smaller than the cardinality of 
  $\tilde\Gamma_1^{(0)}$, which is regular.
Therefore we can find a function $\Phi_2\in \cF_2$  and a subset $\Gamma'_1\subset \tilde\Gamma_i^{(0)}$ of cardinality $\beta_n$ so that
 $\phi_1(a_1,\dot)=\phi_2$ for all $a_1\in\Gamma'_1$. We continue the process and find $\Gamma'_{j}\subset \tilde\Gamma_{j}^{(0)}$, for $j=1,2\ldots, n-2$ of cardinality
 $\beta_{n+1-j}$ and functions $\phi_j\in F_j$, for $j=1,2,\ldots ,n-1$, so that for all $(a_1,a_2,\ldots, a_{n-2})\in \prod_{j=1}^{n-2}\Gamma'_j$ and $a_{n-1}\in \tilde\Gamma_{n-1}^{(0)}$, we have
 \begin{align}
 \phi_1(a_1,a_2,\ldots, a_{n-1})=\phi_2(a_2,\ldots, a_{n-1})=\ldots= \phi_{n-1}(a_{n-1}) .
 \end{align}
 Then, since $\phi_{n-1}$ is $\N$ valued, we can finally  choose an $K_n\in\N$  and a subset $\Gamma'_{n-1}$, of cardinality   at least $\beta$,     so that $\phi_{n-1}(a_1)\le K_{n}$, which finishes our argument.
\end{proof}

\medskip\noindent{\em Continuation of the proof of Theorem \ref{T:3.2}}.  
We apply Lemma \ref{L:3.4} successively to all $i\in\{1,2\ldots n\}$, 
and the cardinals
$\beta^{(i)}= \exp_+(\omega_1, n(n-i))$. 
We obtain numbers $K_1,K_2,\ldots K_n$ in $\N$ and infinite sets
$\Lambda_j\subset \Gamma_i$, for $j\le n$, so that for all $i=1,2\ldots  n$  and all $a\in\prod_{j=1}^n \Lambda_j$
$$\big|\{ a\in\Lambda_i: i(a_1,a_2,\ldots, a_{i-1},a, a_{i+1},\ldots a_n)=i\}\big|\le K_i.$$
In order to deduce a contradiction choose for each $j=1,\ldots n$  a subset $A_j$ of $\Lambda_j$ of cardinality $l_j=(n+1)K_j$.
Then it follows that
\begin{align*}
\prod_{j=1} l_j&= \Big|\prod_{j=1}^n A_j\Big|\\
&=\sum_{i=1}^n \sum_{ a\in\prod_{j=1,j\not=i}^n A_j} \big|\{a\in A_i: i(a_1,a_2,\ldots, a_{i-1},a, a_{i+1},\ldots a_n)=i\}\big|\\
&\le \sum_{i=1}^n K_i\prod_{j=1, j\not=i}^n l_j\le\frac{n}{n+1}\prod_{j=1}^nl_j
\end{align*}
which is a contradiction and finishes the proof of the Theorem.
\end{proof}

We can now state the ZFC version of Theorem 4.1. in \cite{PHK}, in which it was shown that for weakly compact cardinalities 
   $\kappa_0$ the space $C(K_{\kappa_0})$,  where $K_{\kappa_0}$ was defined at the end of Section \ref{S:2},
 cannot be uniformly
(or coarsely) embedded into any $c_0(\Gamma)$,  where $\Gamma$ has  any cardinality.
 Since the only property of $\kappa_0$, which is needed in \cite{PHK}, is the fact  that $(P(\kappa_0))$ holds, we deduce 
 \begin{cor}\label{C:2.8}
 $C(K_\lambda) $ does not coarsely  (or uniformly) embed into $c_0(\Gamma)$, for any cardinality $\Gamma$.
\end{cor}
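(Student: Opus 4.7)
The plan is to observe that Corollary \ref{C:2.8} is a direct consequence of Theorem \ref{T:3.1} together with the argument already carried out in \cite{PHK}. There, for a weakly compact cardinal $\kappa_0$, the space $C(K_{\kappa_0})$ is shown to fail the uniform Stone property, and hence, by the equivalences recorded in Section \ref{S:2}, not to embed coarsely or uniformly into any $c_0(\Gamma)$. A careful reading of that proof reveals that the weak compactness hypothesis on $\kappa_0$ enters only through the combinatorial conclusion $P(\kappa_0)$. Since Theorem \ref{T:3.1} now establishes $P(\lambda)$ within ZFC, the same proof applies verbatim with $\lambda$ in place of $\kappa_0$.

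For transparency I would include a brief outline of the mechanism. Suppose toward a contradiction that $C(K_\lambda)$ embeds coarsely (or uniformly) into some $c_0(\Gamma)$. The equivalences of Section \ref{S:2} then imply that $C(K_\lambda)$ has the uniform Stone property. Using the continuous indicator functions attached to the isolated points $F\kin[\lambda]^n$ of the clopen stratum $[\lambda]^{\le n}\ksubset K_\lambda$, one constructs, as in \cite{PHK}, a uniform cover $\cU$ of $C(K_\lambda)$ adapted to these functions. Any putative point-finite uniform refinement $\cV$ of $\cU$ then induces a coloring $\sigma:[\lambda]^n\to\cV$, obtained by selecting, for each $F\kin[\lambda]^n$, a member $V(F)\in\cV$ that contains a fixed small ball about the indicator of $F$.

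Applying $P(\lambda)$ to this $\sigma$ yields one of the two alternatives in Theorem B, each contradicting a defining property of $\cV$: a pairwise disjoint sequence $(F_j)$ with $\sigma(F_j)$ constant forces a single member of $\cV$ to meet balls around infinitely many pairwise far-apart indicator functions, violating point-finiteness; whereas an $F\kin[\lambda]^{n-1}$ along which $\sigma$ takes infinitely many values produces infinitely many distinct members of $\cV$ clustered inside a single element of $\cU$, contradicting that $\cV$ is a uniform refinement of the (uniformly bounded) cover $\cU$. The main obstacle is therefore not a new mathematical step but rather the bookkeeping verification that the cover-and-coloring construction of \cite{PHK} is genuinely insensitive to the specific cardinal used, depending only on property $P(\cdot)$; granting this modularity, which is essentially explicit in \cite{PHK}, the corollary follows.
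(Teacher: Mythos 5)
Your proposal is correct and is essentially the paper's own proof: the paper deduces Corollary \ref{C:2.8} exactly as you do, by observing that the argument for Theorem 4.1 of \cite{PHK} uses the weak compactness of $\kappa_0$ only through the combinatorial property $P(\kappa_0)$, so Theorem \ref{T:3.1} ($P(\lambda)$ in ZFC) lets the same argument run with $\lambda$ in place of $\kappa_0$. One small caveat about your illustrative sketch (which the proof does not actually depend on, since it defers to \cite{PHK}): the roles of the two alternatives are swapped --- a constant-colour family of pairwise disjoint sets puts points of mutual distance growing with $n$ into a single member of the refinement, contradicting the bound on its diameter (not point-finiteness), whereas the line along which $\sigma$ takes infinitely many values produces a single point lying in infinitely many members of the refinement, which is what contradicts point-finiteness (many members of $\cV$ inside one member of $\cU$ is not, by itself, any contradiction with being a refinement).
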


\section{Proof of Theorem A}\label{S:4}

In this section we use our combinatorial Theorem B from Section \ref{S:3} to show Theorem A.

Recall that a long Schauder basis of a Banach space $X$ is a transfinite
sequence $\{e_\gamma\}_{\gamma=0}^\Gamma$ such that for every $x\in X$ there exists
a unique transfinite sequence of scalars $\{a_\gamma\}_{\gamma=0}^\Gamma$
such that $x=\sum_{\gamma=0}^\Gamma a_\gamma e_\gamma$.
Similarly,  a long Schauder basic sequence in a Banach space $X$ is a transfinite
sequence $\{e_\gamma\}_{\gamma=0}^\Gamma$ which is a long Schauder basis
of its closed linear span.
Recall that the $w^*-\text{dens}(X^*)$
is the smallest cardinal such that there exists a $w^*$-dense subset of $X^*$.
Analogously to the classical Mazur construction of a Schauder basic sequence
in a separable Banach space we have the following result, proved e.g.
in \cite[p.135]{HMVZ}  (the fact that the basis is normalized, i.e.
$\|e_\gamma\|=1$,
is not a part of the statement in \cite{HMVZ}, but it is
easy to get it by normalizing the existing basis).
\begin{thm}
Let $X$ be a Banach space with $\Gamma=w^*-\text{dens} X^*>\omega_0$.
Then $X$ contains a monotone normalized
long Schauder basic sequence of length $\Gamma$.
\end{thm}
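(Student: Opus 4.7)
The plan is to run Mazur's classical construction of a basic sequence transfinitely up to length $\Gamma$. I would build $(e_\alpha)_{\alpha<\Gamma}\subset S_X$ by induction on $\alpha<\Gamma$, maintaining the invariant that $(e_\beta)_{\beta<\alpha}$ is already a monotone long Schauder basic sequence and that $E_\alpha:=\overline{\text{span}}\{e_\beta:\beta<\alpha\}$ is a proper subspace of $X$.

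For the successor step I would apply Mazur's Lemma: for any finite dimensional subspace $F\subset X$ and any $\eta>0$, an $\eta$-net on $S_F$ normed via Hahn--Banach yields finitely many norming functionals $f_1,\dots,f_k\in S_{X^*}$ such that every $y\in\bigcap_{i=1}^k\ker f_i$ satisfies $\|x\|\le(1+\eta)\|x+\lambda y\|$ for all $x\in F$ and $\lambda\in\R$. Monotone basicity of a long basic sequence is characterised by inequalities on finite linear combinations, so at stage $\alpha$ it suffices to arrange this Mazur condition for every finite $G\subset\alpha$ and every $\eta>0$. Iterating along a cofinal sequence of pairs $(G_n,\eta_n)$ with $\eta_n\downarrow 0$, and passing to a weak$^*$-cluster point of candidate unit vectors in $B_{X^{**}}$ (which can be realised in $X$ itself by a standard norm-attainment argument), produces a unit vector $e_\alpha\in S_X$ that extends the basic sequence monotonically.

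The cardinality hypothesis $\Gamma=w^*\text{-}\dens\,X^*>\omega_0$ enters to keep the induction running through every $\alpha<\Gamma$. At each stage, $\dens(E_\alpha)\le\max(|\alpha|,\omega_0)<\Gamma$, and if $E_\alpha$ equalled $X$ we would get $w^*\text{-}\dens\,X^*\le\dens\,X\le\dens(E_\alpha)<\Gamma$ (using the trivial inequality $w^*\text{-}\dens\,Y^*\le\dens\,Y$ valid on every Banach space $Y$), contradicting the definition of $\Gamma$. Thus $E_\alpha\subsetneq X$, so admissible candidates for $e_\alpha$ outside $E_\alpha$ exist. Limit stages require no additional work, as monotone basicity is preserved under directed unions since the basis constant is determined pointwise on finite linear combinations.

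The main obstacle I expect is the upgrade from the $(1+\eta)$-Mazur inequality to the exactly monotone inequality at every successor step; this is where the weak$^*$-compactness / extremal argument is needed and where the construction is most technical. Once it is in place, the transfinite induction is routine and the collected sequence $(e_\alpha)_{\alpha<\Gamma}$ is the required monotone normalized long Schauder basic sequence.
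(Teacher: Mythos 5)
Your overall strategy (transfinite induction, with the cardinality hypothesis guaranteeing the induction never stalls) is the right frame, and your verification that $E_\alpha\subsetneq X$ via $w^*\text{-}\dens X^*\le \dens X$ is fine. But the successor step has a genuine gap, and it is exactly the step you flag as "the main obstacle". Mazur's lemma only applies to finite-dimensional subspaces and only yields $(1+\eta)$-estimates, and your scheme for combining these fails twice over: when $\alpha$ is uncountable there is no countable cofinal family of finite subsets $G\subset\alpha$, so "iterating along a cofinal sequence of pairs $(G_n,\eta_n)$" cannot exhaust the required conditions; and even for countable $\alpha$, a weak$^*$-cluster point in $B_{X^{**}}$ of vectors each satisfying finitely many $(1+\eta)$-conditions need not lie in $X$, need not have norm one, and there is no mechanism that upgrades the constants $1+\eta$ to the exact constant $1$ needed for monotonicity. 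Note also that the way you use the hypothesis $\Gamma>\omega_0$ --- only to conclude $E_\alpha\ne X$ --- is too weak: for a general proper closed subspace $E$ there need not exist any unit vector $e$ with $\|x\|\le\|x+\lambda e\|$ for all $x\in E$, $\lambda\in\R$ (this says $E$ is $1$-complemented in $E\oplus\R e$, which can fail), so properness alone cannot drive the construction.

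The argument behind the paper's citation of \cite[p.135]{HMVZ} uses the cardinality gap in an essentially different, and stronger, way, and avoids any limiting procedure: together with the vectors one builds an increasing transfinite chain of sets $W_\alpha\subset S_{X^*}$ with $|W_\alpha|\le\max(|\alpha|,\omega_0)<\Gamma$ such that $W_\alpha$ is $1$-norming for $E_\alpha$ (norming functionals of a dense subset of $E_\alpha$), and one chooses $e_\alpha$ to be a norm-one vector in $\bigcap_{f\in W_\alpha}\ker f$. Such a vector exists because the rational linear span of $W_\alpha$ has cardinality $<\Gamma=w^*\text{-}\dens X^*$, hence is not $w^*$-dense, and therefore its preannihilator in $X$ is nontrivial. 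Since every later vector $e_\gamma$, $\gamma\ge\alpha$, is annihilated by $W_\alpha\subset W_\gamma$, one gets for finite sums $\|x\|=\sup_{f\in W_\alpha}|f(x)|=\sup_{f\in W_\alpha}|f(x+y)|\le\|x+y\|$ whenever $x\in\spa(e_\beta:\beta<\alpha)$ and $y\in\spa(e_\gamma:\gamma\ge\alpha)$, which is exact monotonicity, with normalization for free. Replacing your Mazur/cluster-point successor step by this norming-set kernel argument is what is needed to make the proof work.
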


\begin{proof}[Proof of Theorem A]
Using the Hahn-Banach theorem it is easy to see
that $w^*-\text{dens} X^*\le \text{dens} X$.
On the other hand, since every $x\in X$ is uniquely determined by its
values on a $w^*$-dense subset of $X^*$, it is clear that

\[
\text{dens} X\le\text{card} X\le 2^{w^*-\text{dens} X^*}
\]

It follows that for $\lambda$ defined in \eqref{def-lam} we get
that $\lambda=w^*-\text{dens} X^*$ holds if and only if
 $\lambda=\text{dens} X$.
In order to prove Theorem A
we may assume without loss of generality that
$X$ has a long  normalized  and monotone Schauder basis  $(e_\mu)_{\mu<\lambda}$, of length $\lambda$, i.e. $\Gamma=\lambda$. 

Set
\[
D_n=\{F\subset\lambda: |F|=n\},\;\;n\in\N
\]

Suppose that $F=\{\gamma_1,\dots,\gamma_n\}$ where $\gamma_1<\dots<\gamma_n$
are elements of $[0,\lambda)$ arranged in an increasing order.
Consider the corresponding
finite set $M_F=\{\sum_{i=1}^n\vp_i e_{\gamma_i}: \vp_i\in\{-1,1\}\}$,
containing $2^n$ distinct vectors of $X$,
and put a linear order $\prec$ on this set according to the arrangement
of the signs $\vp_i$,  setting
\[
\sum_{i=1}^n\vp_i e_{\gamma_i}\prec
\sum_{i=1}^n\tilde{\vp}_i e_{\gamma_i}
\]
if and only if for the minimal $i$, such that $\vp_i\ne\tilde{\vp}_i$,
it holds $\vp_i<\tilde{\vp}_i$.
In order to prove Theorem A it suffices to show that if $M=\cup_{F\in D_n, n\in\N} M_F\subset X$
has the coarse Stone property then $X$ fails to have nontrivial cotype.
To this end,
starting with $\cU=\{ B_2(x): x\in M\}$ we find a uniform bounded cover
$\cV$, which is point finite and  so that $\cU$ refines $\cV$, \ie
for all $x\in M$ there is a $V_x\in \cV$ with $B_2(x)\subset V_x$.
Let $r>0$ be such that each $V\in \cV$ is  a subset of a ball of radius $r$.

Let $\mathcal C$ be the set  consisting of all finite tuples $(V^1,\dots, V^m)$,
where $V^j\in\mathcal V$.
We now define the function $\sigma:M\to\mathcal C$ as follows.
If $F\in D_n, F=\{\gamma_1,\dots,\gamma_n\}$ where $\gamma_1<\dots<\gamma_n$,
we let

\begin{equation}\label{sig-def}
\sigma(F)=(V_{y_1},\dots,V_{y_{2^n}}),
\end{equation}

where $y_1\prec\dots\prec y_{2^n}$ are the elements of $M_F$
arranged in the increasing order.
Applying Theorem B to the function $\sigma$, for a fixed $n\in\N$,
yields one of two possibilities.
Either
there is an $F=\{\gamma_1,\dots,\gamma_{n-1}\}$, where
$\gamma_1<\dots<\gamma_{n-1}$, so that
$\sigma\big(\big\{ F\cup\{\tau\}: \tau\in \lambda\setminus F\big\}\big)$
is infinite. In this case, pick an infinite sequence of distinct $\{\tau_j\}_{j=1}^\infty$
witnessing the desired property.
By passing to a subsequence, we may assume without loss of generality that
either there exists $k, 1\le k\le n-1$, so that for all $j\in\N$,
$\gamma_k<\tau_j<\gamma_{k+1}$, or $\tau_j<\gamma_1$ for all $j\in\N$,
or $\gamma_{n-1}<\tau_j$ for all $j\in\N$.
For simplicity of notation, assume the last case,
i.e. $\gamma_1<\dots<\gamma_{n-1}<\tau_j$ holds for all $j\in\N$.
Denoting $F^j=\{\gamma_1,\dots,\gamma_{n-1},\tau_j\}$, we
conclude that there exists a fixed selection of signs
$\vp_1,\dots,\vp_n$ such that the set
\[
B=\Big\{V_y: y=\sum_{i=1}^{n-1}\vp_i e_{\gamma_i}+\vp_n\tau_j, j\in\N\Big\}
\]
is infinite. Indeed, otherwise the set of values
$\{\sigma(\{\gamma_1,\dots,\gamma_{n-1},\tau_j\}), j\in\N\}$,
which are determined by the definition \eqref{sig-def}, would have only a finite
set of options for each coordinate, and would therefore have to be finite.
This is a contradiction with the point finiteness of the system $\mathcal V$,
because
\[
\sum_{i=1}^{n-1}\vp_i e_{\gamma_i}\in V_y,\;\;\text{for all}\; V_y\in B.
\]

It remains to consider the other case when
there is a sequence $(F_j)$  of pairwise  disjoint elements
of $[\lambda]^n$, with $\sigma(F_i)\keq\sigma(F_j)$,
for  any $i,j\in\N$. In fact, it suffices to choose just a pair
of such disjoint elements (written in an increasing order of ordinals)
$F=\{\gamma_1,\dots,\gamma_{n}\}$, $G=\{\beta_1,\dots,\beta_{n}\}$,
such that $\sigma(F)=\sigma(G)$.
This means, in particular, that for every
fixed selection of signs
$\vp_1,\dots,\vp_n$,
\[
V_{\sum_{i=1}^{n}\vp_i e_{\gamma_i}}=
V_{\sum_{i=1}^{n}\vp_i e_{\beta_i}}
\]
By our assumption, the elements of $\mathcal V$ are contained in a ball
of radius $r$, hence
\begin{equation}\label{all-sign}
\|\sum_{i=1}^{n}\vp_i e_{\gamma_i}-\sum_{i=1}^{n}\vp_i e_{\beta_i}\|\le2r
\end{equation}
holds for any selection of signs
$\vp_1,\dots,\vp_n$.
Let $u_j=e_{\gamma_j}-e_{\beta_j}$, $j\in\{1,\dots,n\}$. Because
$\{e_\gamma\}$ is a monotonne normalized long Schauder basis, we have
the trivial estimate $1\le\|u_j\|\le2$.
The equation \eqref{all-sign} means that

\begin{equation}\label{all-sign-2}
1\le \|\sum_{i=1}^{n}\vp_i u_i\|\le2r
\end{equation}
holds for any selection of signs
$\vp_1,\dots,\vp_n$.  Since norm functions are convex, this means that for the unite vector ball $B_E$ of of $E=\text{span}(u_i:i\le )$ it  follows that 
$$\Big\{ \sum_{j=1}^n a_j u_j: |a_j|\le \frac1{2r}\Big\}\subset  B_E \subset\Big\{\sum_{j=1}^n a_j u_j :|a_j|\le 2\Big\},$$
which means that $(u_j)_{j=1}^n$ is $4r$-equivalent to the unit vector basis of $\ell_\infty^n$.
\end{proof}

In fact, our proof gives a much stronger condition than just failing cotype,
because our copies of $\ell_\infty^k$ are formed by vectors of the type
$e_\alpha-e_\beta$. This fact can be used to obtain
much stronger structural results for spaces with special bases.
Recall that a long Schauder basis $\{e_\gamma\}_{\gamma=1}^\Lambda$
is said to be symmetric if
\[
\|\sum_{i=1}^n a_i e_{\gamma_i}\|=
\|\sum_{i=1}^n a_i e_{\beta_i}\|
\]
for any selection of $a_i\in\R$, and any pair of sets
$\{\gamma_i\}_{i=1}^n\subset[1,\Lambda)$,
$\{\beta_i\}_{i=1}^n\subset[1,\Lambda)$. It is well-known (c.f. 
\cite[ Prop. II.22.2]{Si}), that each symmetric basis is
automatically unconditional, i.e. there exists $K>0$ such that
\[
\frac1K\|\sum_{i=1}^n |a_i| e_{\gamma_i}\|\le
\|\sum_{i=1}^n a_i e_{\gamma_i}\|\le K
\|\sum_{i=1}^n |a_i| e_{\gamma_i}\|.
\]
In particular, 
\[
\frac1K\|\sum_{i\in A} a_i e_{\gamma_i}\|\le
\|\sum_{i\in B} a_i e_{\gamma_i}\|
\]
whenever $A\subset B$.

\begin{thm}\label{sym-case}
Let $X$ be a Banach space of density $\Lambda\ge\lambda$, with a symmetric basis
$\{e_\gamma\}_{\gamma=1}^\Lambda$, which coarsely (or uniformly)
embeds into some $c_0(\Gamma)$. 

Then $X$ is linearly isomorphic
with $c_0(\Lambda)$.
\end{thm}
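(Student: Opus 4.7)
The plan is to refine the conclusion of Theorem A using the symmetry of $\{e_\gamma\}_{\gamma<\Lambda}$, and then pass from differences of basis vectors to the individual basis vectors themselves by exploiting the (automatic) unconditionality. First I would go back through the proof of Theorem A and record precisely what it produces. Apart from giving $X$ trivial cotype, it delivers, for every $n\in\N$, a disjoint pair of $n$-sets $F=\{\gamma_1,\dots,\gamma_n\}$ and $G=\{\beta_1,\dots,\beta_n\}$ in $[1,\Lambda)$ for which the differences $u_i:=e_{\gamma_i}-e_{\beta_i}$ satisfy
\[
1\le\Big\|\sum_{i=1}^n\vp_i u_i\Big\|\le 2r
\]
for every choice of signs $(\vp_i)\in\{-1,+1\}^n$, where $r>0$ is a constant independent of $n$ (it arises as the radius bound of a single uniformly bounded cover, chosen once and for all from the coarse Stone property of $X$). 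Symmetry of the basis then immediately promotes this estimate: the same two-sided inequality holds for $u_i=e_{\gamma_i}-e_{\beta_i}$ with arbitrary disjoint $n$-element subsets $\{\gamma_i\}$ and $\{\beta_i\}$ of $[1,\Lambda)$.

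The next step is to move from differences to single basis vectors. Let $K$ be the unconditional constant, which exists by the fact (recalled just before the theorem) that every symmetric basis is unconditional. Applying the $0/1$-multiplier associated with the support $F$ to the vector $\sum\vp_i u_i$ gives
\[
\Big\|\sum_{i=1}^n\vp_i e_{\gamma_i}\Big\|\le K\Big\|\sum_{i=1}^n\vp_i u_i\Big\|\le 2Kr.
\]
Unconditionality next upgrades this to $\|\sum a_i e_{\gamma_i}\|\le 2K^2 r\max_i|a_i|$ for arbitrary scalars, while the matching lower bound $\|\sum a_i e_{\gamma_i}\|\ge K^{-1}\max_i|a_i|$ is immediate from the boundedness of the coordinate projections. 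Hence every finite subfamily of the basis is $C$-equivalent, with $C=2K^3 r$, to the unit vector basis of the appropriate $\ell_\infty^n$, and the constant $C$ depends neither on $n$ nor on the particular choice of indices.

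The final step is a standard limiting argument. A normalized long Schauder basis all of whose finite subfamilies are uniformly $C$-equivalent to $\ell_\infty^n$-bases is equivalent to the canonical unit vector basis of $c_0(\Lambda)$: any transfinite expansion $x=\sum a_\gamma e_\gamma$ has at most countably many nonzero coefficients, and by the uniform finite-dimensional estimate $\|x\|$ is comparable to $\sup_\gamma|a_\gamma|$, while convergence of partial sums along finite subsets of $\Lambda$ corresponds exactly to $|a_\gamma|\to 0$ off finite sets. This yields the desired linear isomorphism $X\cong c_0(\Lambda)$.

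The main obstacle I expect is really bookkeeping in the first step: one must verify that the constant $r$ produced inside the proof of Theorem A can be chosen independently of $n$ (it can, since $r$ is the fixed radius bound of the uniformly bounded cover, while $n$ is chosen only afterwards) and that symmetry suffices to shift the pair $(F,G)$ to any other disjoint configuration without loss of the estimate. Once these two points are secured, the passage from differences of basis vectors to $\ell_\infty^n$-copies spanned by individual $e_\gamma$'s, and then to the global $c_0(\Lambda)$-structure, is entirely formal.
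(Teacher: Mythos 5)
Your proposal is correct and follows essentially the same route as the paper: extract from the proof of Theorem A the uniform $\ell_\infty^n$-estimates for the differences $e_{\gamma_i}-e_{\beta_i}$, transfer them to the basis vectors themselves via unconditionality (the paper's ``easy manipulation'' yielding \eqref{sssi} is exactly your restriction-plus-multiplier step), and then use symmetry to make the resulting two-sided $\sup$-norm estimate hold for every finite subfamily, which identifies the basis with the unit vector basis of $c_0(\Lambda)$.
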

\begin{proof}

By the proof of the above results, if $X$ embeds into $c_0(\Gamma)$,
there exists an $C>0$, such that for each $k\in\N$ there are some vectors
$\{v_i\}_{i=1}^k$ of the form $v_i=e_{\gamma_i}-e_{\beta_i}$ satisfying
 the conditions

\begin{equation}\label{V-f}
\frac12\max_j|a_i|\le
\Big \|\sum_{i=1}^k a_i v_i\Big\|
 \le C\max_j|a_j|.
\end{equation}

Using the fact that the basis $\{e_\gamma\}$ is unconditional,
(and symmetric) we obtain by an easy manipulation that there exist some
constants $A,B>0$ such that

\begin{equation}\label{sssi}
A\Big\|\sum_{i=1}^k a_i e_{\gamma_i}\Big\|\le
\Big\|\sum_{i=1}^k a_i v_i\Big\|\le B\Big\|\sum_{i=1}^k a_i e_{\gamma_i}\Big\|
\end{equation}

Combining \eqref{V-f} and \eqref{sssi} we finally obtain
that for some  $D\ge 1$, and any $k\in\N$, 
\begin{equation}\label{V-f-2}
\frac1D\max_j|a_i|\le
\|\sum_{i=1}^k a_i e_{\gamma_i}\|\le
D\max_j|a_j|.
\end{equation}
for all  $\{\gamma_1,\dots,\gamma_k\}\subset[1,\Lambda)$,
which proves our claim.
\end{proof}

\section{Final comments and open problems}\label{S:5}

Let us mention in this final section some problems of interest.

First of all, we do not know whether or not Theorem A is true if we replace $\lambda$ by smaller cardinal numbers.

\begin{prob}\label{Prob:1} Assume that $X$ is  a Banach space with $\text{\rm dens}(X)\ge \omega_1$, and assume that $X$ coarsely embeds into $c_0(\Gamma)$ for some cardinal number  $\Gamma$.
Does $X$ have trivial co-type? If moreover $X$ has a symmetric basis, must it be isomorphic  to $c_0(\omega_1)$?
\end{prob} 

Of course  Problem \ref{Prob:1} would have a positive answer if the following is true.

\begin{prob}\label{Prob:2} Is Theorem B true for $\omega_1$?
\end{prob}

Connected to Problems  \ref{Prob:1} and \ref{Prob:2} is the following 
\begin{prob}\label{Prob:3} Does $\ell_\infty$ coarsely embed into $c_0(\kappa)$ for some uncountable cardinal number $\kappa$.
\end{prob}

Another   line of interesting problems  asks which isomorphic properties do  non separable  Banach spaces have which  coarsely embed into $c_0(\Gamma)$

\begin{prob}\label{Prob:4} Does a non separable Banach space which  coarsely embeds into some $c_0(\Gamma)$, $\Gamma$  being uncountable, contain copies $c_0$, or even $c_0(\omega_1)$.
\end{prob}


\begin{thebibliography}{MMM}
\bibitem{Ba}  J.~E.~ Baumgartner, {\em Canonical partition relations}, J. Symbolic Logic {\bf 40} (1975), 541--554.
\bibitem{DJT} J. Diestel, H. Jarchow, and A. Tonge,
{\em Absolutely Summing Operators}, Cambridge University Press (1993).

\bibitem{GLZ} G. Godefroy, G. Lancien, and V. Zizler,
{\em The non-linear geometry of Banach spaces after Nigel Kalton}, Rocky Mtn. J. Math.
44 (2014), 1529--1583.
\bibitem{HMVZ} P. H\'ajek, V. Montesinos, J. Vanderwerff, and V. Zizler,
{\em Biorthogonal Systems in Banach Spaces}, CMS Books in Mathematics, Springer 2008.
\bibitem{Je} Th.~Jech, {\em Set Theory}, Springer Monographs in Mathematics, fourth edition, Springer Verlag, Berlin, Heidelberg, New York, 2006.
\bibitem{PHK}  J.~Pelant, P.~Holick\'y, and O.~Kalenda, {\em $C(K)$ spaces which cannot uniformly embedded into $c_0(\Lambda)$}. Fund.Math. {\bf 192} (2006), 245 -- 254.
\bibitem{Pe} J.~Pelant, {\em Embeddings into $c_0$}, Topol. Appl. {\bf 57} (1994), 259 -- 269.
\bibitem{PR}  J.~Pelant,  and V. Rodl, {\em On coverings of
infinite-dimensional metric spaces}. Discrete .Math. {\bf 108} (1992), 75--81.
\bibitem{Si} I. Singer, {\em Bases in Banach spaces I}, Grundlehren Math. Wiss.
154, Springer-Verlag, Berlin (1970).
\bibitem{Sw} A.~Swift, {\em On coarse Lipschitz embeddability into $c_0(\kappa)$}, preprint, arXive:1611.04623v1.



\end{thebibliography}
\end{document}